\documentclass[preprint,12pt]{elsarticle}

\usepackage[utf8]{inputenc}
\usepackage{amsmath}
\usepackage{amscd}
\usepackage{amsthm}
\usepackage{ upgreek }
\usepackage{amssymb}
\usepackage{hyperref}
\usepackage{enumitem}
\usepackage{url}
\usepackage{times}

\newtheorem{lemma}{Lemma}[section]

\numberwithin{equation}{section}

\journal{Journal of Number Theory}
\begin{document}
	\begin{frontmatter}
	
	

	\title{On the discrete mean square of certain hybrid sum involving $a_{\mathbb{K}}(n)$}
	
	
	\author[label1]{EKTA SONI}
	\author[label2]{M.S. DATT}
	\author[label3]{AYYADURAI SANKARANARAYANAN}
	\affiliation[label1]{organization={University of Hyderabad},
		addressline={CR Rao Road, Gachibowli}, 
		city={Hyderabad},
		postcode={500046}, 
		state={Telangana},
		country={India}}
	\affiliation[label2]{organization={University of Hyderabad},
		addressline={CR Rao Road, Gachibowli}, 
		city={Hyderabad},
		postcode={500046}, 
		state={Telangana},
		country={India}}
		\affiliation[label3]{organization={University of Hyderabad},
			addressline={CR Rao Road, Gachibowli}, 
			city={Hyderabad},
			postcode={500046}, 
			state={Telangana},
			country={India}}
	\begin{abstract}
	Let $\mathbb{K}$ be a non-normal algebraic number field of cubic degree given by the polynomial $x^{3}+ax^{2}+bx+c$ of discriminant $D_{\mathbb{K}}<0$. For sufficiently large $x$, we establish an asymptotic formula for the hybrid sum
$$\sum\limits_{\substack{n= \sum_{i=1}^{8}n_{i}^{2}\leq x\\ (n_{1},n_{2},n_{3},n_{4},n_{5},n_{6},n_{7},n_{8})\in \mathbb{Z}^{8}  }} a_{\mathbb{K}}^{2}(n)$$
with a tight error term.
\end{abstract}



\begin{keyword}
	Dedekind zeta function, Hecke eigenforms, Normalized Fourier coefficient, Symmetric square $L$-function, Dirichlet character.
	
	
	\MSC[2000] 11M \sep 11M06 \sep 11F11 \sep 11F30\sep 11R42.
	
\end{keyword}

\end{frontmatter}

	\section{Introduction}
Let $\mathbb{K}$ be a number field of degree $m$ over $\mathbb{Q}$. Let $\mathcal{O}_{\mathbb{K}}$ be the ring of integers in $\mathbb{K}$. The Dedekind zeta function associated to $\mathbb{K}$ is defined as 
\begin{center}
	$\zeta_{\mathbb{K}}(s)=\prod\limits_{\substack{\mathcal{P}\subseteq \mathcal{O}_{\mathbb{K}}\\ \mathcal{P} \neq (0)}}\big(1-\frac{1}{N(\mathcal{P})}\big)^{-1}=\sum\limits_{\substack{\mathfrak{a}\subseteq \mathcal{O}_{\mathbb{K}}\\\mathfrak{a}\neq (0)}}\frac{1}{N^{s}(\mathfrak{a})}$,
\end{center}
for any $s\in \mathbb{C}$, $Re(s)>1$, where the product is taken over all non-zero prime ideals in $\mathcal{O}_{\mathbb{K}}$ and $N(\mathfrak{a})$ is the absolute norm of non-zero integral ideals.\\
The Dedekind zeta function can be extended meromorphically to the whole complex plane with a simple pole at $Re(s)=1$. The residue of the Dedekind zeta function at $s=1$ is given by
\begin{center}
	$\frac{2^{r_{1}}(2\pi)^{r_{2}}hR}{\omega \sqrt{|D_{\mathbb{K}}|}}$,
\end{center} 
where $r_{1}$ is the total number of real embeddings and $r_{2}$ is the total number of complex embeddings, $h$ is the class number, $R$ denotes the regulator, $\omega$ is the number of units in $\mathcal{O}_{\mathbb{K}}$ and $|D_{\mathbb{K}}|$ is the absolute value of the discriminant of $\mathbb{K}$.\\

For $n\in \mathbb{N}$, let $a_{\mathbb{K}}(n)$ be the number of integral ideals of norm $n$ in $\mathcal{O}_{\mathbb{K}}$, then Dedekind zeta function $\zeta_{\mathbb{K}}(s)$ can be written as
\begin{center}
	$\zeta_{\mathbb{K}}(s)=\sum\limits_{n=1}^{\infty}\frac{a_{\mathbb{K}}(n)}{n^{s}},~~~~~Re(s)>1.$
\end{center}
As shown by Chandrasekharan and Good \cite{CKAG}, the coefficients $a_{\mathbb{K}}(n)$ are multiplicative and satisfy the following upper bound
\begin{center}
	$a_{\mathbb{K}}(n)\leq d(n)^{[\mathbb{K}:\mathbb{Q}]}\ll n^{\epsilon}$,
\end{center}
where $d(n)$ is the divisor function. As $a_{\mathbb{K}}(n)$ is multiplicative, we have
\begin{center}
	$\displaystyle \zeta_{\mathbb{K}}(s)=\prod\limits_{p}\Big(1+\frac{a_{\mathbb{K}}(p)}{p^{s}}+\frac{a_{\mathbb{K}}(p^{2})}{p^{2s}}+\dots+\frac{a_{\mathbb{K}}(p^{\ell})}{p^{\ell s}}+\dots\Big)$,
\end{center}
for $Re(s)>1$.\\
In $1949$, Landau\cite{Land}, in $1963$ Chandrasekharan and Narasimhan \cite{CKN} , and in $1983$ chandrasekharan and Good \cite{CKAG} considered first, second and $\ell^{th}$-moments $(\ell \geq 2)$ of $a_{\mathbb{K}}(n)$'s respectively and obtained asymptotic formulae.\\

For a cubic non-normal extension $\mathbb{K}$ over $\mathbb{Q}$, given by an irreducible polynomial $x^{3}+ax^{2}+bx+c\in \mathbb{Z}[x]$ such that the discriminant $D_{\mathbb{K}}<0$,
recently Naveen and Prashant \cite{P tiwari} studied the Problem for $\ell^{th}$-order moments of coefficients of the Dedekind zeta function over certain sequence of natural numbers and obtained asymptotic formulae, for a cubic non-normal extension over $\mathbb{Q}$.\\  

In this paper, we study the second order moments of coefficients of Dedekind zeta function over certain sequence of natural numbers. More precisely, we would like to investigate the asymptotic behaviour of $\sum\limits_{\substack{ n= \sum_{i=1}^{8}n_{i}^{2}\leq x\\ (n_{1},\dots,n_{8})\in \mathbb{Z}^{8}  }} a_{\mathbb{K}}^{2}(n)$ for sufficiently large $x$ and establish an asymptotic formula with a tight error term.\\

\noindent Indeed we prove the following:\\
\noindent\textbf{Main Theorem}:	\textit{ For $x\geq x_{0}$, where $x_{0}$ is sufficiently large, and $\epsilon>0$ any small constant, we have
	\begin{align*} 
		\sum\limits_{\substack{n= \sum_{i=1}^{8}n_{i}^{2}\leq x\\ (n_{1},n_{2},n_{3},n_{4},n_{5},n_{6},n_{7},n_{8})\in \mathbb{Z}^{8}  }} a_{\mathbb{K}}^{2}(n)&=C x^{4} P_{1}(\log{x})+O(x^{\frac{198}{53}+\epsilon}),
	\end{align*}
	where $P_{1}(\log{x})$ is a polynomial of degree $1$ in $\log{x}$.
}\\

The paper is planned as follows. In section $2$, we introduce some preliminaries and important lemmas that will play crucial role in the proof of our result. In section $3$, the proof of the theorem is presented. Throughout the paper, $\epsilon$ denotes sufficiently small positive constant, not necessarily the same at each occurrence and the implied constants may depend on $\epsilon,\,f$ and $|D_{\mathbb{K}}|$ and effective.  
\section{Preliminaries and Some Lemmas}
Let $k$ be a positive even integer and $H_{k}(SL(2,\mathbb{Z}))$ be the set of all normalized Hecke eigenforms of weight $k$ for the full modular group $SL(2,\mathbb{Z})$. For a cusp form $f\in H_{k}(SL(2,\mathbb{Z}))$, let $\lambda_{f}(n)$ be the normalized $n^{th}$ coefficients of Fourier expansion of $f$ at infinity, i.e, 
\begin{align*}
	f(z)&=\sum_{n=1}^{\infty} \lambda_{f}(n)n^{\frac{k-1}{2}}e^{2\pi i nz},~~~~\text{for all}~z\in{\mathbb{H}},
\end{align*}
where $\mathbb{H}$ be the Poincar\'e upper Half-plane. These Fourier coefficients $\lambda_{f}(n)$ are the Hecke eigenvalues of $f$ and $\lambda_{f}(n)\in \mathbb{R}$ $\forall~~n.$ The $\lambda_{f}(n)$ satisfies the following relation. For $m,n\in \mathbb{N}$
\begin{align*}
	\lambda_{f}(m)\lambda_{f}(n)&=\sum\limits_{d|gcd(m,n)}\lambda_{f}\Big(\frac{mn}{d^{2}}\Big).
\end{align*}
Thus $\lambda_{f}(n)$ is a multiplicative function.\\
In 1974, P. Deligne \cite{PD} proved that for any prime $p$, there exist complex numbers $\alpha_{f}(p),\,\beta_{f}(p)$ such that
\begin{align*}
	\lambda_{f}(p)&=\alpha_{f}(p)+\beta_{f}(p),\\
	|\alpha_{f}(p)|&=|\beta_{f}(p)|=\alpha_{f}(p)\beta_{f}(p)=1.
\end{align*}
Given any Dirichlet's character $\chi$ of Modulo $N$, we have the Dirichlet's $L$-function, namely
\begin{center}
	$L(s,\chi)=\sum\limits_{n=1}^{\infty}\frac{\chi(n)}{n^{s}}~~~\text{for}~Re(s)>1$,
\end{center}
and
\begin{align*}
	L(s,sym^{j}f)&=\prod_{p}\Big(\prod\limits_{i=0}^{j}\Big(1-\frac{\alpha^{j-i}_{f}(p)\beta^{i}_{f}(p)}{p^{s}}\Big)\Big)^{-1}\\
	&=\sum_{n=1}^{\infty}\frac{\lambda_{sym^{j}f}(n)}{n^{s}}~~~\text{for}~Re(s)>1.
\end{align*}
Note that $\lambda_{sym^{j}f}(n)$ is also real valued multiplicative function. We also have the Deligne's bound $\lambda_{f}(n)\leq d(n)\leq n^{\epsilon}$, $d(n)$ is the divisor function.\\
We note that the $L$-function associated to $\lambda_{f}(n)$ is defined as
\begin{align*}
	L(s,f)&=\sum_{n=1}^{\infty}\frac{\lambda_{f}(n)}{n^{s}}, 
\end{align*}
for $Re(s)>1,$ where $\lambda_{f}(n)$ is the eigenvalue of Hecke operators $T_{n}$ and the symmetric square $L$-function is defined as
\begin{align*}
	L(s,sym^{2}f)&=\sum\limits_{n=1}^{\infty}\frac{\lambda_{sym^{2}f}(n)}{n^{s}}\\
	&=\prod_{p} \Big(1-\frac{\alpha^{2}(p)}{p^{s}}\Big)^{-1}\Big(1-\frac{\beta^{2}(p)}{p^{s}}\Big)^{-1}\Big(1-\frac{1}{p^{s}}\Big)^{-1},
\end{align*}
for $Re(s)>1,$ where $\lambda_{sym^{2}f}(n)$ is multiplicative.\\
Let $N\in \mathbb{N}$ and $\chi$ be a Dirichlet character $(\mod N)$. Let $\Gamma_{0}(N)=\Big\{ \begin{pmatrix}
	a & b\\
	c & d
\end{pmatrix}
\in SL(2,\mathbb{Z})~\big|~\text{$c$ is a multiple of $N$}\Big\}$. The subgroup $\Gamma_{0}(N)$ is called the congruence subgroup of $SL(2,\mathbb{Z})$. For $\begin{pmatrix}
	a & b\\
	c & d
\end{pmatrix} \in \Gamma_{0}(N) $, and $z\in \mathbb{H}$ (the upper half plane), if $f \begin{pmatrix}
	az+b\\
	cz+d
\end{pmatrix}=\chi(d)(cz+d)^{k}f(z)$, then $f$ is known as the modular form of weight $k$ of level $N$ with Nebentypus $\chi$. \\
From \cite{P tiwari}, for a holomorphic cusp form  $f$ of weight $1$ of the congruence subgroup $\Gamma_{0}(N)$ with $N=|D_{\mathbb{K}}|$, we have 
\begin{center}
	$\zeta_{\mathbb{K}}(s)=\zeta(s)L(s,f),$
\end{center}
and hence,
\begin{center}
	$a_{\mathbb{K}}(n)=\sum\limits_{d|n}\lambda_{f}(d).$
\end{center}
This implies that $a_{\mathbb{K}}(p)=1+\lambda_{f}(p).$\\

\noindent Let
\begin{center}
	$	q(x_{1},x_{2},x_{3},x_{4},x_{5},x_{6},x_{7},x_{8})=\sum\limits_{i=1}^{8}x_{i}^{2}\in \mathbb{Z}[x_{1},x_{2},x_{3},x_{4},x_{5},x_{6},x_{7},x_{8}],$
\end{center} 
and
\begin{center}
	$r_{8}(n)=\#\{(a_{1},a_{2},a_{3},a_{4},a_{5},a_{6},a_{7},a_{8})\in \mathbb{Z}^{8}~|~ \sum_{i=1}^{8} a_{i}^{2}=n\}. $
\end{center}
From \cite{Jacobi}, we know that $r_{8}(n)=16(-1)^{n}\sum_{d|n}(-1)^{d}d^{3}$. First, we show that $(-1)^{n}\sum_{d|n}(-1)^{d}d^{3}$ is a multiplicative function.
\begin{lemma}
	Let $g(n)=(-1)^{n}\sum_{d|n}(-1)^{d}d^{3}$. Then $g(n)$ is a multiplicative arithmetic function.
\end{lemma}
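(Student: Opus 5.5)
The plan is to find a closed form for $g$ that is visibly multiplicative. I will split according to the parity of $n$, writing $n=2^{a}m$ with $m$ odd, and then check that $g(n_{1}n_{2})=g(n_{1})g(n_{2})$ whenever $\gcd(n_{1},n_{2})=1$.

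\emph{Odd $n$.} Every divisor $d$ of $n$ is odd, so $(-1)^{d}=-1$. Since also $(-1)^{n}=-1$, we get
\[
g(n)=(-1)(-1)\sum_{d\mid n}d^{3}=\sigma_{3}(n).
\]

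\emph{Even $n=2^{a}m$ with $a\ge 1$, $m$ odd.} Now $(-1)^{n}=1$. We split the divisors as $d=2^{b}e$ with $e\mid m$ and $0\le b\le a$. The terms with $b=0$ contribute $-\sigma_{3}(m)$. The terms with $b\ge1$ contribute $\sigma_{3}(m)\sum_{b=1}^{a}8^{b}$. Hence
\[
g(n)=\sigma_{3}(m)\Big(\sum_{b=1}^{a}8^{b}-1\Big)=\sigma_{3}(m)\,g(2^{a}),
\]
where
\[
g(2^{a})=\frac{8^{a+1}-8}{7}-1=\frac{8^{a+1}-15}{7}.
\]
The same formula also covers $a=0$, because $(8-15)/7=-1$, which matches $g(1)=(-1)(-1)=1$ once the sign $(-1)^{n}$ is handled correctly. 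To avoid that sign subtlety, I simply treat $a=0$ separately as the odd case already done.

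\emph{Multiplicativity.} Let $\gcd(n_{1},n_{2})=1$. At most one of $n_{1},n_{2}$ is even.
\begin{itemize}
\item If both are odd, then $g=\sigma_{3}$ on $n_{1}$, $n_{2}$ and $n_{1}n_{2}$, and $\sigma_{3}$ is multiplicative.
\item If $n_{1}=2^{a}m_{1}$ with $a\ge1$ and $n_{2}$ is odd, then $n_{1}n_{2}=2^{a}m_{1}n_{2}$ with $\gcd(m_{1},n_{2})=1$. Using the even-case formula and the multiplicativity of $\sigma_{3}$,
\[
g(n_{1}n_{2})=g(2^{a})\,\sigma_{3}(m_{1})\,\sigma_{3}(n_{2})=g(n_{1})\,g(n_{2}).
\]
\end{itemize}
Finally, $g(1)=1$, so $g$ is multiplicative.

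The only delicate point is the sign bookkeeping for the factor $(-1)^{n}$ and the $b=0$ divisors in the even case; everything else is routine.
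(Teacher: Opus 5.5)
Your proof is correct, and it is organized differently from the paper's. The paper verifies only $g(p^{\ell}q^{m})=g(p^{\ell})g(q^{m})$ for two distinct primes $p,q$. It does this by expanding both sides in the cases $p=2$ with $q$ odd, and $p,q$ both odd.

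You instead prove two structural identities: $g(n)=\sigma_{3}(n)$ for odd $n$, and $g(2^{a}m)=g(2^{a})\,\sigma_{3}(m)$ for $a\ge 1$ and $m$ odd. Multiplicativity then follows from that of $\sigma_{3}$.

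The underlying computation is the same in both: divisors are split by parity, and the odd ones carry the sign $-1$. Your route, however, is complete. Checking the identity only on pairs of prime powers does not by itself give $g(n_{1}n_{2})=g(n_{1})g(n_{2})$ for arbitrary coprime $n_{1},n_{2}$, for example $n_{1}=p^{\ell}q^{m}$ and $n_{2}=r^{k}$. Your argument covers all coprime pairs at once. It also makes the local factors explicit: $g(p^{k})=\sigma_{3}(p^{k})$ for odd $p$, and $g(2^{a})=(8^{a+1}-15)/7$.

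One cosmetic fix: delete the sentence claiming that $(8^{a+1}-15)/7$ ``also covers $a=0$''. At $a=0$ it gives $-1$, not $g(1)=1$, because the outer factor $(-1)^{n}$ equals $-1$ rather than $+1$ when $n$ is odd. Since you treat $a=0$ separately anyway, nothing in your argument depends on that remark.
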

\begin{proof}
	To prove the lemma, we show that for any two distinct primes $p,\,q$ and for $\ell,m \in \mathbb{N}$
	\begin{center}
		$g(p^{\ell}q^{m})=g(p^{\ell})g(q^{m})$.
	\end{center}
	\textbf{Case(i)} In this case we take $p=2$ and $q$ is odd. By the definition of $g(n)$,
	\begin{align*}
		g(2^{\ell})&=(-1)^{2^{\ell}}\{-1+2^{3}+2^{6}+\dots+2^{3\ell}\}\\
		&=\{-1+2^{3}+2^{6}+\dots+2^{3\ell}\},
	\end{align*}
	and 
	\begin{align*}
		g(q^{m})&=(-1)^{q^{m}}\{-1-q^{3}-\dots-q^{3m}\}\\
		&=\{1+q^{3}+\dots+q^{3m}\}.
	\end{align*}
	Therefore,
	\begin{align*}
		g(2^{\ell})g(q^{m})&=\{-1+2^{3}+2^{6}+\dots+2^{3\ell}\}\{1+q^{3}+\dots+q^{3m}\}\\
		&=-\{1+q^{3}+\dots+q^{3m}\}+\sum\limits_{\substack{1\leq i\leq \ell \\ 0\leq j \leq m}} 2^{3i} q^{3j}\\
		&=-\sum\limits_{j=0}^{m}q^{3j}+\sum\limits_{\substack{1\leq i\leq \ell \\ 0\leq j \leq m}} 2^{3i} q^{3j}.
	\end{align*}
	Now take,
	\begin{align*}
		g(2^{\ell}q^{m})&=(-1)^{2^{\ell}q^{m}}\big\{-1+\sum\limits_{i=1}^{\ell}2^{3i}-\sum\limits_{j=1}^{m}q^{3j}+\sum\limits_{\substack{1\leq i\leq \ell \\ 1\leq j \leq m}} 2^{3i} q^{3j}\big\}\\
		&=-\big\{1+\sum\limits_{j=1}^{m}q^{3j}\big\}+\big\{\sum\limits_{i=1}^{\ell}2^{3i}+ \sum\limits_{\substack{1\leq i\leq \ell \\ 1\leq j \leq m}} 2^{3i} q^{3j}\big\}\\
		&=-\sum\limits_{j=0}^{m}q^{3j}+\sum\limits_{\substack{1\leq i\leq \ell \\ 0\leq j \leq m}} 2^{3i} q^{3j}.
	\end{align*}
	Therefore, $g(2^{\ell}q^{m})=g(2^{\ell})g(q^{m})$.\vspace{3mm}\\
	\textbf{Case (ii)} Suppose both $p,\,q$ are odd primes.\\
	We have,
	\begin{align*}
		g(p^{\ell})&=(-1)^{p^{\ell}}\{-1-p^{3}-p^{6}\dots-p^{3\ell}\}\\
		&=1+p^{3}+p^{6}+\dots+p^{3\ell},
	\end{align*}
	and 
	\begin{align*}
		g(p^{m})&=1+q^{3}+q^{6}+\dots+q^{3m}.
	\end{align*}
	Therefore,
	\begin{align*}
		g(p^{\ell})g(p^{m})&=\{1+p^{3}+p^{6}+\dots+p^{3\ell}\}\{1+q^{3}+q^{6}+\dots+q^{3m}\}\\
		&=\sum\limits_{\substack{0\leq i\leq \ell \\ 0\leq j \leq m}} p^{3i}q^{3j}.
	\end{align*}
	Now, 
	\begin{align*}
		g(p^{\ell}q^{m})=(-1)^{p^{\ell}q^{m}}\sum\limits_{\substack{0\leq i\leq \ell \\ 0\leq j \leq m}}(-1)^{p^{3i}q^{3j}} p^{3i}q^{3j}=\sum\limits_{\substack{0\leq i\leq \ell \\ 0\leq j \leq m}} p^{3i}q^{3j}.
	\end{align*}
	Therefore,
	$g(p^{\ell}q^{m})=g(p^{\ell})g(q^{m})$.\\
	Hence the Lemma.
\end{proof}

We know that for instance by \cite{OM} , $a_{\mathbb{K}}(n)$ is multiplicative and by the above Lemma $g(n)$ is multiplicative. Hence $a_{\mathbb{K}}^{2}(n)g(n)$ is also multiplicative. To study the asymptotic behaviour of $\sum\limits_{\substack{ n= \sum_{i=1}^{8}n_{i}^{2}\leq x\\ (n_{1},n_{2},n_{3},n_{4},n_{5},n_{6},n_{7},n_{8})\in \mathbb{Z}^{8}  }}a_{\mathbb{K}}^{2}(n)$, we consider the Dirichlet's series associated to $a_{\mathbb{K}}^{2}(n)$ over the sequence of natural numbers given by $r_{8}(n)$. For any $x\in \mathbb{R^{+}},~x\geq 1$, we have
\begin{align*}
	\sum\limits_{\substack{n= \sum_{i=1}^{8}n_{i}^{2}\leq x\\ (n_{1},n_{2},n_{3},n_{4},n_{5},n_{6},n_{7},n_{8})\in \mathbb{Z}^{8}  }}a_{\mathbb{K}}^{2}(n) &=\sum_{n\leq x}a_{\mathbb{K}}^{2}(n)\Big(\sum\limits_{\substack{n= \sum_{i=1}^{8}n_{i}^{2}\\ (n_{1},n_{2},n_{3},n_{4},n_{5},n_{6},n_{7},n_{8})\in \mathbb{Z}^{8}}} 1\Big)\\
	&= \sum_{n\leq x} a_{\mathbb{K}}^{2}(n) r_{8}(n)\\
	&=16\sum_{n\leq x} a_{\mathbb{K}}^{2}(n) g(n).
\end{align*}

\begin{lemma} The $L$-function $F(s)$ associated with $a_{\mathbb{K}}^{2}(n)r_{8}(n)$ $i.e,$
	\begin{align*}
		F(s):=L(s, a_{\mathbb{K}}^{2}(n)r_{8}(n))&=\sum_{n=1}^{\infty}\frac{a_{\mathbb{K}}^{2}(n)r_{8}(n)}{n^{s}},
	\end{align*}
	can be expressed as
	\begin{align*}
		F(s)&=16\frac{B_{2}(s)}{A_{2}(s)}\zeta^{2}(s-3)L^{2}(s-3,f)L(s-3,sym^{2}f)\zeta^{2}(s)L^{2}(s,f)L(s,sym^{2}f)B(s).
	\end{align*}
	Let 
	\begin{center}
		$G(s)=16\zeta^{2}(s)L^{2}(s,f)L(s,sym^{2}f)B(s),$
	\end{center}
	where $B(s)$ is a harmless factor and $G(s)$ converges absolutely for $Re(s)>\frac{7}{2}$.
\end{lemma}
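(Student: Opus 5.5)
The plan is to compute the Euler product of $F(s)$ prime by prime and compare it with the stated product of $L$-functions. By the first Lemma and the multiplicativity of $a_{\mathbb{K}}(n)$, the function $a_{\mathbb{K}}^{2}(n)g(n)$ is multiplicative, so
\begin{align*}
F(s)=16\prod_{p}\Big(1+\sum_{\ell\geq1}\frac{a_{\mathbb{K}}^{2}(p^{\ell})g(p^{\ell})}{p^{\ell s}}\Big)
\end{align*}
for $Re(s)$ large. We use $g(p^{\ell})=\sum_{j=0}^{\ell}p^{3j}$ for odd $p$ and $g(2^{\ell})=-1+\sum_{j=1}^{\ell}2^{3j}$. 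Hence at odd primes $g(p^{\ell})=p^{3\ell}\big(1+p^{-3}+\dots\big)$. The dominant contribution therefore looks like a shift by $3$, and the lower divisors $p^{3j}$, $j<\ell$, give a second, unshifted copy. This explains the two blocks, one at $s-3$ and one at $s$.

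The key local computation is at the prime $p$ itself. We have $a_{\mathbb{K}}(p)=1+\lambda_f(p)$, so
\begin{align*}
a_{\mathbb{K}}^{2}(p)=1+2\lambda_f(p)+\lambda_f(p)^{2}=2+2\lambda_f(p)+\lambda_{sym^{2}f}(p),
\end{align*}
using the Hecke relation $\lambda_f(p)^2=1+\lambda_f(p^2)$ and $\lambda_{sym^2f}(p)=\lambda_f(p^2)$. (For weight one forms with nebentypus $\chi$ this reads $\lambda_f(p)^2=\chi(p)+\lambda_f(p^2)$; I will follow the paper's normalization as stated.) This coefficient $2+2\lambda_f(p)+\lambda_{sym^2f}(p)$ is precisely the $p$-th coefficient of $\zeta^{2}L^{2}(\cdot,f)L(\cdot,sym^{2}f)$. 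Multiplying by $g(p)=1+p^{3}$, the $p^{-s}$ coefficient of the local factor of $F$ equals that of the local factor of
\begin{align*}
16\,\zeta^{2}(s-3)L^{2}(s-3,f)L(s-3,sym^{2}f)\,\zeta^{2}(s)L^{2}(s,f)L(s,sym^{2}f)
\end{align*}
up to cross terms of size $p^{3-2s}$ and higher. This is because the product of the two blocks has $p$-coefficient $c(p)p^{3}+c(p)$, where $c(p)=a_{\mathbb{K}}^2(p)$.

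I then define $B(s)$ as the ratio of the two Euler products, with the finitely many bad primes (those $p\mid 2|D_{\mathbb{K}}|$) set aside in a separate ratio $B_2(s)/A_2(s)$ of finite Euler factors. The step needing care is to show that each local factor of $B$ equals $1+O(p^{6-2\sigma+\epsilon})$. To do this I bound the coefficients of $p^{-\ell s}$, $\ell\geq2$, in both products using Deligne's bound: they are $\ll p^{3\ell+\epsilon}$. Since the $p^{-s}$ terms cancel, the ratio is $1+\sum_{\ell\ge2}O(p^{3\ell-\ell\sigma+\epsilon})$. This converges absolutely for $\sigma>7/2$, since then $\sum_p p^{6-2\sigma+\epsilon}<\infty$. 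So $B(s)$ is absolutely convergent and nonvanishing, i.e. harmless, there. The same argument with the shift removed shows $G(s)$ converges absolutely for $Re(s)>7/2$: the factors $\zeta^2(s)L^2(s,f)L(s,sym^2f)$ converge absolutely for $Re(s)>1$, and $B$ does so for $Re(s)>7/2$.

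The main obstacle is bookkeeping at the bad primes and at $p=2$, where the sign in $g(2^\ell)$ and the ramified behaviour of $a_{\mathbb{K}}(p)$ spoil the clean matching. I would absorb these into the finite factor $B_2(s)/A_2(s)$. This factor is holomorphic and nonzero for $Re(s)>3$, except at the finitely many zeros of the local polynomials, which lie on $Re(s)\le3$ or can be handled separately.
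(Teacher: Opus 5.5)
\paragraph{Overall.} You follow the paper's proof step for step. Both expand the Euler product of $a_{\mathbb K}^2(n)g(n)$, read off two blocks at $s-3$ and $s$ from $g(p^\ell)=\sum_{j\le\ell}p^{3j}$, match the $p^{-s}$ coefficient $a_{\mathbb K}^2(p)(1+p^3)$, and put $p=2$ into a finite correction factor. Your bound $1+O(p^{6-2\sigma+\epsilon})$ for the leftover factor is more explicit than anything the paper writes.

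\paragraph{The gap.} The step you flagged and then set aside is a genuine gap, not a normalization issue, and the paper's proof takes exactly the same step. A nonzero weight-one form has odd nebentypus, here $\chi=\chi_{D_{\mathbb K}}$. Hence $\lambda_f(p)^2=\chi(p)+\lambda_f(p^2)$ and
\[
a_{\mathbb K}^2(p)=1+\chi(p)+2\lambda_f(p)+\lambda_{sym^2f}(p).
\]
You can see the failure concretely. Suppose $p\mathcal O_{\mathbb K}=\mathfrak p_1\mathfrak p_2$ with residue degrees $1$ and $2$, so $\chi(p)=-1$. Then $a_{\mathbb K}(p)=1$ and $a_{\mathbb K}(p^2)=2$, hence $\lambda_f(p)=0$ and $\lambda_{sym^2f}(p)=\lambda_f(p^2)=1$. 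So $a_{\mathbb K}^2(p)=1$, while $2+2\lambda_f(p)+\lambda_{sym^2f}(p)=3$.

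These primes have density $1/2$, and at each of them the local factor of your $B$ is $1-2(1+p^3)p^{-s}+\cdots$. In fact, up to finitely many Euler factors,
\[
B(s)=\frac{L(s-3,\chi)L(s,\chi)}{\zeta(s-3)\zeta(s)}H(s),
\]
with $H$ absolutely convergent for $\sigma>7/2$. So the $p^{-s}$ terms do not cancel, and $B$ is not absolutely convergent on $7/2<\sigma\le 4$; its continuation even vanishes at $s=4$. The factorization with $\zeta^2$ therefore cannot hold as stated.

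The repair is to replace $\zeta^2(\cdot)$ by $\zeta(\cdot)L(\cdot,\chi_{D_{\mathbb K}})$ in both blocks. After that, your estimate of the coefficients of $p^{-\ell s}$, $\ell\ge 2$, goes through unchanged. Note also that for this dihedral $f$ one has $L(s,sym^2f)=\zeta(s)L(s,f)$. So the double pole at $s=4$ comes from $\zeta(s-3)L(s-3,sym^2f)$, not from $\zeta^2(s-3)$.

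\paragraph{A smaller point: the bad primes.} Suppose $A_2$ means, as in the paper, the factor at $2$ built with the odd-prime formula $\sum_{j\le\ell}8^j$. Then it is a power series in $2^{-s}$, not a polynomial, and it can vanish inside the strip. For example, if $2\mathcal O_{\mathbb K}=\mathfrak p_1\mathfrak p_2^2$, then $a_{\mathbb K}(2^\ell)=\ell+1$ and $A_2(s)=0$ near $s\approx 3.71+i\pi/\log 2$. There $B_2\neq 0$, so $B_2/A_2$ has poles inside the strip.

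The clean choice is to take, at each bad prime, the true local factor of $F$ divided by the local factor of the $L$-product. The reciprocal of the latter is a polynomial in $p^{-s}$, so the quotient is holomorphic and bounded on $\sigma\ge 7/2+\epsilon$.

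You should also drop the nonvanishing claims. They are not needed for the lemma, and local factors of $B$ at small primes can vanish for $\sigma>7/2$, e.g.\ at $p=3$ when $3$ splits completely in $\mathbb K$.
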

\begin{proof} 
	As $a_{\mathbb{K}}^{2}(n)$ and $(-1)^{n}\sum_{d|n}(-1)^{d}d^{3}$ are multiplicative, we have
	\begin{align*}
		F(s) &= L(s,a_{\mathbb{K}}^{2}(n)r_{8}(n))=16L(s,a_{\mathbb{K}}^{2}(n)g(n))\\
		&=16\sum_{n=1}^{\infty} \frac{a_{\mathbb{K}}^{2}(n)\big((-1)^{n}\sum_{d|n}(-1)^{d}d^{3}\big)}{n^{s}}\\
		&=16\prod_{p}\Big(1-\frac{a_{\mathbb{K}}^{2}(p)((-1)^{p+1}+p^{3})}{p^{s}}\Big)^{-1}.
	\end{align*}
	For $p\neq 2$, we let
	\begin{align*}
		A_{p}(s)&=1+\frac{a_{\mathbb{K}}^{2}(p)g(p)}{p^{s}}+\frac{a_{\mathbb{K}}^{2}(p^{2})g(p^{2})}{p^{2s}}+\cdots\\
		&=1+\frac{a_{\mathbb{K}}^{2}(p)(1+p^{3})}{p^{s}}+\frac{a_{\mathbb{K}}^{2}(p^{2})(1+p^{3}+p^{6})}{p^{2s}}+\cdots.
	\end{align*} 
	For $p=2$, we have
	\begin{align*}
		A_{2}(s)&=1+\frac{a_{\mathbb{K}}^{2}(2)(1+2^{3})}{2^{s}}+\frac{a_{\mathbb{K}}^{2}(2^{2})(1+2^{3}+2^{6})}{2^{2s}}+\cdots,
	\end{align*}
	and let
	\begin{align*}
		B_{2}(s)&=1+\frac{a_{\mathbb{K}}^{2}(2)(-1+2^{3})}{2^{s}}+\frac{a_{\mathbb{K}}^{2}(2^{2})(-1+2^{3}+2^{6})}{2^{2s}}+\cdots.
	\end{align*}
	Then,
	\begin{align*}
		F(s)&=16 B_{2}(s)\prod_{p\neq 2} A_{p}(s)\\
		&=16\frac{B_{2}(s)}{A_{2}(s)}\prod_{p} A_{p}(s) \\
		&=16\frac{B_{2}(s)}{A_{2}(s)}\prod_{p}\Big(1+\frac{a_{\mathbb{K}}^{2}(p)(1+p^{3})}{p^{s}}+\frac{a_{\mathbb{K}}^{2}(p^{2})(1+p^{3}+p^{6})}{p^{2s}}+\dots\Big)\\
		&=16\frac{B_{2}(s)}{A_{2}(s)}\prod_{p}\Big(1+\frac{(1+\lambda_{f}(p))^{2}(1+p^{3})}{p^{s}}+\frac{(1+\lambda_{f}(p^{2}))^{2}(1+p^{3}+p^{6})}{p^{2s}}+\dots\Big)\\
		&=16\frac{B_{2}(s)}{A_{2}(s)}\prod_{p}\Big(1+\frac{(2+2\lambda_{f}(p)+\lambda_{sym^{2}f}(p))(1+p^{3})}{p^{s}}+\dots\Big)\\
		&=16\frac{B_{2}(s)}{A_{2}(s)}\prod_{p}\Big(1+\frac{2+2\lambda_{f}(p)+\lambda_{sym^{2}f}(p)+2p^{3}+2p^{3}\lambda_{f}(p)+p^{3}\lambda_{sym^{2}f}(p)}{p^{s}}+\dots\Big)\\
		&=16\frac{B_{2}(s)}{A_{2}(s)}\zeta^{2}(s-3)L^{2}(s-3,f)L(s-3,sym^{2}f)\zeta^{2}(s)L^{2}(s,f)L(s,sym^{2}f)B(s),
	\end{align*}
	where $B(s)$ is a harmless factor which is absolutely convergent for $Re(s)>\frac{7}{2}$. The Dirichlet series $F(s)$ has a pole of order $2$ at $s=4$.\\
	
	\newpage
	\noindent Let
	\begin{center}
		$G(s)=16\zeta^{2}(s)L^{2}(s,f)L(s,sym^{2}f)B(s)$.
	\end{center}
	Now, we can write\\
	\begin{center}
		$F(s)=\frac{B_{2}(s)}{A_{2}(s)}\zeta^{2}(s-3)L^{2}(s-3,f)L(s-3,sym^{2}f)G(s)$.
	\end{center}
	We would like to show that $0<\Big|\frac{B_{2}(s)}{A_{2}(s)}\Big|\leq 1$ for $Re(s) =\frac{7}{2}$.\\
	As $\mathbb{K}/\mathbb{Q}$ is a field extension of degree $3$, the ideal generated by a prime number $p$ in $\mathbb{Z}$ can be expressed as a product of prime ideals in $\mathcal{O}_{\mathbb{K}}$ as
	\begin{center}
		$p\mathcal{O}_{\mathbb{K}}=\mathcal{P}_{1}^{e_{1}}\,\mathcal{P}_{2}^{e_{2}}\dots\mathcal{P}_{r}^{e_{r}}$,
	\end{center}
	with $e_{1}f_{1}+e_{2}f_{2}+\dots+e_{r}f_{r}=3$, where $e_{i}$ is the ramification index and $f_{i}$ is the degree of $\mathcal{O}_{\mathbb{K}}/\mathcal{P}_{i}$ over the field $\mathbb{Z}_{p}$ (see, Page.no. $52$, \cite{milne}). This implies that $r\leq 3$ and $p\mathcal{O}_{\mathbb{K}}=\mathcal{P}_{1}^{e_{1}}\,\mathcal{P}_{2}^{e_{2}}\,\mathcal{P}_{3}^{e_{3}}$.\\
	In particular for the ideal $(2)$ generated by $2$
	\begin{center}
		$(2)\mathcal{O}_{\mathbb{K}}=\mathcal{P}_{1}^{e_{1}}\,\mathcal{P}_{2}^{e_{2}}\,\mathcal{P}_{3}^{e_{3}}$.
	\end{center}
	We have the following three cases:\\
	\textbf{Case (i):} If $(2)\mathcal{O}_{\mathbb{K}}$ is a prime ideal in $\mathcal{O}_{\mathbb{K}}$, then $a_{\mathbb{K}}(2^{n})\leq1$.\\
	
	\noindent\textbf{Case (ii):} If $(2)\mathcal{O}_{\mathbb{K}}=\mathcal{P}_{1}\mathcal{P}_{2}\mathcal{P}_{3}$, $\mathcal{P}_{1},\,\mathcal{P}_{2},\,\mathcal{P}_{3}$ are distinct then $a_{\mathbb{K}}(2^{n})\leq 1$.\\
	
	\noindent\textbf{Case (iii):} If $(2)\mathcal{O}_{\mathbb{K}}=\mathcal{P}_{1}^{e_{1}}\mathcal{P}_{2}^{e_{2}}$, then $e_{1}+e_{2}=3$. Assume that $e_{1}=1$ and $e_{2}=2$ then $N((2)\mathcal{O}_{\mathbb{K}})=2^{3}$. Hence we have, if    $I$ is an ideal of $\mathcal{O}_{\mathbb{K}}$ such that $2^{n}=N(I)$, then  $I=\mathcal{P}_{1}^{e_{1}}\mathcal{P}_{2}^{e_{2}}$ such that $e_{1}+2e_{2}=3$. Therefore the number of integral ideals in $\mathcal{O}_{\mathbb{K}}$ whose norm is $2^{n}$, is less than or equal to $\Big[\frac{n}{2}\Big]+1\leq n$, for $n\geq 2.$\\
	
	\noindent For $s=\frac{7}{2}$, we show that $A_{2}(\frac{7}{2})$ converges absolutely. 
	\begin{align*}
		A_{2}(\frac{7}{2})&=1+\frac{a_{\mathbb{K}}^{2}(2)(2^{3}+1)}{2^{\frac{7}{2}}}+\frac{a_{\mathbb{K}}^{2}(2^{2})(2^{6}+2^{3}+1)}{2^{2.\frac{7}{2}}}+\cdots\\
		&=1+\frac{a_{\mathbb{K}}^{2}(2)(2^{6}-1)}{(2^{3}-1)2^{\frac{7}{2}}}+\frac{a_{\mathbb{K}}^{2}(2^{2})(2^{9}-1)}{(2^{3}-1)2^{2.\frac{7}{2}}}+\cdots\\
		&\leq 1+\frac{1}{7}\Big\{\frac{a_{\mathbb{K}}^{2}(2)2^{6}}{2^{\frac{7}{2}}}+\frac{a_{\mathbb{K}}^{2}(2^{2})2^{9}}{2^{2.\frac{7}{2}}}+\frac{a_{\mathbb{K}}^{2}(2^{3})2^{12}}{2^{3.\frac{7}{2}}}+\cdots\Big\}\\
		&=1+\frac{8}{7}\Big\{\frac{a_{\mathbb{K}}^{2}(2)}{2^{\frac{1}{2}}}+\frac{a_{\mathbb{K}}^{2}(2^{2})}{2}+\frac{a_{\mathbb{K}}^{2}(2^{3})}{2^{\frac{3}{2}}}+\cdots\Big\}\\
		&\leq 1+\frac{8}{7}\Big\{\frac{a_{\mathbb{K}}^{2}(2)}{\sqrt{2}}+\frac{a_{\mathbb{K}}^{2}(2^{2})}{(\sqrt{2})^{2}}+\frac{a_{\mathbb{K}}^{2}(2^{3})}{(\sqrt{2})^{3}}+\cdots\Big\}.
	\end{align*}
	We know that, $a_{\mathbb{K}}(2^{n})\leq n$, therefore $(\sqrt{2})^{n}=2^{\frac{n}{2}}=e^{\frac{n\log{2}}{2}}>\frac{(\frac{n\log{2}}{2})^{4}}{4!}$, which implies that $\frac{1}{(\sqrt{2})^{n}}<\frac{2.4!}{(n\log{2})^{4}}$.\\
	Hence,
	\begin{align*}
		\sum_{n=0}^{\infty} \frac{a_{\mathbb{K}}^{2}(2^{n})}{(\sqrt{2})^{n}}&< 1+\frac{8}{7}\sum_{n=1}^{\infty}\frac{a_{\mathbb{K}}^{2}(2^{n}).2.4!}{(n\log{2})^{4}}\\
		&\leq 1+\frac{8}{7}\sum_{n=1}^{\infty} \frac{(n)^{2}.2.4!}{n^{4}(\log{2})^{4}}\\
		&=1+\frac{2^{4}.4!}{7(\log 2)^{4}}\sum_{n=1}^{\infty} \frac{1}{n^{2}}\\
		&\leq\frac{2^{4}.4!}{7(\log 2)^{4}}\Big(1+\sum_{n=1}^{\infty}\frac{1}{n^{2}}\Big).
	\end{align*}
	Hence $1+\sum_{n=0}^{\infty} \frac{a_{\mathbb{K}}^{2}(2^{n})}{(\sqrt{2})^{n}}$ is a convergent series, and hence $A_{2}(\frac{7}{2})$ converges.\\
	As, $\frac{a_{\mathbb{K}}^{2}(2^{n})(2^{3n}+2^{3(n-1)}+\dots+2^{3}-1)}{2^{\frac{7}{2}n}}\leq\frac{a_{\mathbb{K}}^{2}(2^{n})(2^{3n}+2^{3(n-1)}+\dots+2^{3}+1)}{2^{\frac{7}{2}n}}$ for every $n\in \mathbb{N}$, and $\frac{a_{\mathbb{K}}^{2}(2^{n})(2^{3n}+2^{3(n-1)}+\dots+2^{3}-1)}{2^{\frac{7}{2}n}}>0$, and hence $B_{2}(\frac{7}{2})$ also converges to real number $>0$. Therefore by the Theorem $11.4$ \cite{apostol}, $0<|B_{2}(s)|<|A_{2}(s)|$ for $Re(s)>\frac{7}{2}$ and hence $0<\big|\frac{B_{2}(s)}{A_{2}(s)}\big|\leq 1$ for $Re(s)>\frac{7}{2}$ .
	This implies that, $\frac{B_{2}(s)}{A_{2}(s)}G(s)\neq 0$, and is absolutely convergent for $Re(s)>\frac{7}{2}$.\\
	The Dirichlet's series $F(s)$ has a pole of order $2$ at $s=4$ and converges absolutely for $Re(s)>4$. It does not vanish on the line $Re(s)=4.$
\end{proof}

In the course of the proof of the theorem, we shall make use of certain well-known estimates for the Riemann zeta function and associated $L$-functions, which we state below. 
\begin{lemma}\cite{Kram}
	\textit{	For $U\geq U_{0},$ where $U_{0}$ is sufficiently large, there exists a point $T^*\in (U,\,2U)$ such that
		$$\max\limits_{\sigma \geq \frac{1}{2}}|\zeta(\sigma\pm iT^*)|~\leq~exp(C(\log\log{U})^{2})\ll U^{\epsilon}.$$}
\end{lemma}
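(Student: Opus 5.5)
This is the result of Ramachandra and Sankaranarayanan, which the paper quotes from \cite{Kram}; for the purposes of the paper one may simply invoke it. Were I to prove it, I would pick $T^*$ by an averaging argument over $t\in(U,2U)$: show that all but a small measure of such $t$ are simultaneously ``good'' for three properties, then bound $\log|\zeta(\sigma\pm it)|$ for one good $t$.

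The three properties are:
\begin{enumerate}[label=(\alph*)]
\item no zero $\rho=\beta+i\gamma$ with $\beta\geq \frac12+\delta$ has $|\gamma\mp t|\leq 1$, where $\delta\asymp (\log\log U)^{2}/\log U$;
\item there are not too many zeros with $|\gamma\mp t|\leq 1$ close to the line $\sigma=\frac12$;
\item a short Dirichlet polynomial $P(s)=\sum_{n\leq X}\Lambda(n)n^{-s}/\log n$, with $X=\exp((\log\log U)^2)$ or a similar choice, satisfies $|P(\sigma\pm it)|\ll(\log\log U)^{2}$ for all $\sigma\geq\frac12+\delta$.
\end{enumerate}

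The first step is property (a). I would use a zero-density estimate of Ingham or Montgomery type, $N(\sigma,2U)\ll U^{c(1-\sigma)}(\log U)^{A}$. The set of $t$ lying within distance $1$ of the ordinate of a zero with $\beta\geq \frac12+\delta$ then has measure $o(U)$ after summing over dyadic ranges of $\sigma$.

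The second step is property (c). I would take high moments (mean $2k$-th powers) of $P(\sigma+it)$ over $t\in(U,2U)$ via the Montgomery--Vaughan mean value theorem, and apply Chebyshev's inequality. A discretisation in $\sigma$ together with a bound on the derivative of $P$ makes this uniform over the whole range $\sigma\geq\frac12+\delta$.

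For $t$ satisfying (a), the function $\log\zeta(s)$ is holomorphic in the rectangle $\sigma>\frac12+\delta/2$, $|\mathrm{Im}\,s\mp t|\leq \frac12$. The standard Titchmarsh-type argument (Borel--Carath\'eodory followed by Hadamard's three-circles theorem, as in Titchmarsh \S14.2) then gives
\[
\log\zeta(\sigma\pm it)=P(\sigma\pm it)+O\big((\log\log U)^{2}\big)
\]
for $\sigma\geq\frac12+\delta$. Combined with (c), this yields $|\zeta(\sigma\pm it)|\leq \exp(C(\log\log U)^{2})$ in that range. For $\sigma\geq 2$ the bound is trivial.

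The main obstacle is the thin strip $\frac12\leq\sigma\leq\frac12+\delta$, where the Euler-product-type approximation is no longer available. There I would start from the partial-fraction formula
\[
\log\zeta(s)=\sum_{|\gamma-t|\leq 1}\log(s-\rho)+O(\log U).
\]
Only an upper bound for $\mathrm{Re}\log\zeta$ is needed, and each term satisfies $\log|s-\rho|\leq\log 2$, but the $O(\log U)$ error and the roughly $\log U$ zeros would only give $U^{\epsilon}$. To do better, I would apply Hadamard's three-circles (or a Phragm\'en--Lindel\"of convexity argument) on a disc centred at $\sigma_0=\frac12+2\delta$ and reaching slightly to the left of $\frac12$. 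The disc has radius comparable to $\delta$, so it meets the line $\sigma=\frac12$. The inputs are the good bound from the previous paragraph on the part of the disc with $\sigma\geq \frac12+\delta$, and the crude polynomial bound $|\zeta|\ll U^{1/4}$ on the remainder. Good $t$ from (b), chosen via the mean value of $\log|\zeta|$ near the line using Jensen's formula, keep the interpolation exponent attached to the crude bound of size $O((\log\log U)^{2}/\log U)$. Then $U^{1/4}$ raised to that power contributes only $\exp(O((\log\log U)^{2}))$.

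Finally, the measures of the three exceptional sets sum to less than $U$. Hence some $T^*\in(U,2U)$ is good for both signs $\pm$ simultaneously, giving
\[
\max_{\sigma\geq\frac12}|\zeta(\sigma\pm iT^*)|\leq\exp\big(C(\log\log U)^{2}\big)\ll U^{\epsilon}.
\]
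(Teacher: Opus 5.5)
\textbf{Verdict: the sketch has a genuine gap.} The paper gives no proof of this lemma; it simply quotes it from \cite{Kram}, so your opening remark matches the paper. Steps (a) and (c) are fine in principle. For (a) you need a density bound that is $o(U)$ near $\sigma=\frac12$, such as Ingham's or Selberg's, not one with a fixed exponent $c>2$.

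\textbf{The central step fails.} For $t$ satisfying (a), Borel--Carath\'eodory followed by three circles does \emph{not} give $\log\zeta(\sigma\pm it)=P(\sigma\pm it)+O((\log\log U)^{2})$ down to $\sigma=\frac12+\delta$.
\begin{itemize}
\item Suppose you impose (c) on a whole two-dimensional neighbourhood rather than just the segment. Even then, the outer-circle input that Borel--Carath\'eodory extracts from $\log|\zeta|\le\log U$ is only $|\log\zeta-P|\ll\log U/\delta$.
\item On a circle whose leftmost point is $1+\eta$, the tail $\sum_{n>X}\Lambda(n)n^{-s}/\log n$ is roughly $X^{-\eta}$.
\item For any admissible centre and radii one checks $\eta(1-\theta)\le\sigma-\frac12$, where $1-\theta$ is the three-circles weight on the inner circle. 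So the gain is at most a factor $X^{-(\sigma-\frac12)}$.
\item With $X=\exp((\log\log U)^{2})$, this beats the loss $\log U/\delta$ only when $\sigma-\frac12\gg 1/\log\log U$.
\end{itemize}
On the whole strip $\frac12+\delta\le\sigma\le\frac12+c/\log\log U$ you get nothing better than a power of $\log U$ for $\log\zeta$. This is the same obstruction that limits Titchmarsh \S14.2 to $(\log t)^{2-2\sigma+\epsilon}$ even on RH. Smallness of $P$ cannot help, because it is the error term that is large. Lengthening $P$ to $X\ge U^{c/\log\log U}$ would, in a contour-shift version, need a zero-free rectangle of height $\asymp X$, which ruins the measure count in (a).

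\textbf{The thin-strip step also fails.} Hadamard's theorem needs bounds on whole circles. With a good bound on one arc and only $|\zeta|\ll U^{1/4}$ on the rest, you are in the two-constants theorem, where the exponent on the crude bound is the harmonic measure of the bad arc. A point on $\sigma=\frac12$ inside a disc of radius $\asymp\delta$ centred at $\frac12+2\delta$ sits right next to the arc lying in $\sigma<\frac12+\delta$, so that harmonic measure is $\asymp 1$. It is fixed by the geometry, and no choice of $t$ via (b) and Jensen's formula can make it $O((\log\log U)^{2}/\log U)$. You only get $U^{c}$.

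\textbf{A short argument that works.} None of this machinery is needed.
\begin{itemize}
\item $|\zeta|^{2}$ is subharmonic.
\item With $r=1/\log U$ one has $\int_{U-1}^{2U+1}|\zeta(\sigma+it)|^{2}\,dt\ll U\log U$ uniformly for $\frac12-r\le\sigma\le 3$. For $\sigma<\frac12$ use the functional equation, since $t^{1/2-\sigma}\ll 1$ there.
\end{itemize}
Bound $|\zeta(\sigma_0+it_0)|^{2}$ by its mean over the disc of radius $r$ about $\sigma_0+it_0$, and integrate in $t_0$. This gives
\[
\int_{U}^{2U}\max_{\frac12\le\sigma\le 2}|\zeta(\sigma+it_0)|^{2}\,dt_0\ll\frac{1}{r}\int_{\frac12-r}^{2+r}\int_{U-1}^{2U+1}|\zeta(\sigma+it)|^{2}\,dt\,d\sigma\ll U(\log U)^{2}.
\]
Hence some $T^*\in(U,2U)$ has $\max_{\sigma\ge\frac12}|\zeta(\sigma\pm iT^*)|\ll\log U$. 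Here $|\zeta|\le\zeta(2)$ covers $\sigma\ge 2$, and $\zeta(\bar s)=\overline{\zeta(s)}$ handles the sign. This is stronger than the lemma as stated.
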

\begin{lemma} \cite{Ylin}
	\textit{	For any $\epsilon>0,$ we have
		\begin{align*}
			\zeta(\sigma+it)&\ll_{\epsilon} (1+|t|)^{max\{\frac{13}{42}(1-\sigma),0\}+\epsilon},
		\end{align*}
		uniformly for $\frac{1}{2}\leq \sigma \leq 1+\epsilon$ and $|t|\geq 1.$}
\end{lemma}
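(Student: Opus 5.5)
The statement is a subconvexity bound for $\zeta$ in the critical strip. The plan is to obtain it by convexity interpolation between a deep bound on the critical line and the trivial bound just to the right of $\sigma=1$. The deep ingredient is Bourgain's estimate
$$\zeta(\tfrac12+it)\ll_{\epsilon}(1+|t|)^{\frac{13}{84}+\epsilon},$$
which we take as known; it is the result recorded in \cite{Ylin}. Note that $\frac{13}{84}=\frac{13}{42}\cdot\frac12$, which is exactly the value of $\frac{13}{42}(1-\sigma)$ at $\sigma=\frac12$. Since the target exponent is linear in $\sigma$ and vanishes at $\sigma=1$, the lemma should follow from this estimate by the Phragmén--Lindelöf principle.

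\textbf{Step 1: the right endpoint.} On the line $\sigma=1+\epsilon$, the Dirichlet series converges absolutely, so $|\zeta(1+\epsilon+it)|\le\zeta(1+\epsilon)\ll_\epsilon 1$.

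\textbf{Step 2: interpolation.} To remove the pole at $s=1$, set $h(s)=\frac{s-1}{s+2}\,\zeta(s)$. This function is holomorphic in the strip $\frac12\le\sigma\le 1+\epsilon$ and of finite order there. On the boundary lines, $\frac{s-1}{s+2}$ is bounded, so $h$ inherits the bounds of Step 1 and of Bourgain's estimate. The Phragmén--Lindelöf convexity principle then gives, uniformly in $\frac12\le\sigma\le1+\epsilon$ and $|t|\ge1$,
$$\zeta(\sigma+it)\ll (1+|t|)^{\left(\frac{13}{84}+\epsilon\right)\frac{1+\epsilon-\sigma}{\frac12+\epsilon}}.$$
For $|t|\ge1$ the factor $\frac{s-1}{s+2}$ is bounded below, so passing back from $h$ to $\zeta$ costs only a constant.

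\textbf{Step 3: simplifying the exponent.} A direct computation shows that the exponent above is at most $\frac{13}{42}(1-\sigma)+O(\epsilon)$ when $\sigma\le1$, and $O(\epsilon)$ when $1\le\sigma\le1+\epsilon$. Both cases match $\max\{\frac{13}{42}(1-\sigma),0\}+\epsilon$ after renaming $\epsilon$.

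\textbf{Main obstacle.} The genuine difficulty lies entirely in the critical-line exponent $\frac{13}{84}$. Proving it requires Bourgain's decoupling-based bounds for exponential sums combined with the Bombieri--Iwaniec method, which is far beyond the scope of this paper and is simply cited. Apart from that input, the only point needing care is the pole at $s=1$, which the factor $\frac{s-1}{s+2}$ handles, together with the bookkeeping of the $\epsilon$-losses.
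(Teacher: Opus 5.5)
Your proof is correct. The paper gives no proof of this lemma and only cites it; your route (Bourgain's critical-line bound $\zeta(\tfrac12+it)\ll(1+|t|)^{13/84+\epsilon}$, then Phragm\'en--Lindel\"of on the strip $\tfrac12\le\sigma\le1+\epsilon$ after removing the pole) is the standard derivation behind the cited bound. The one bookkeeping point is this: if you shrink $\epsilon$ to $\epsilon'$, the leftover range $1+\epsilon'\le\sigma\le1+\epsilon$ is covered trivially by absolute convergence, exactly as in your Step 1.
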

\begin{lemma}\cite{Aivik}
	\textit{For any $\epsilon>0$, we have 
		\begin{align*}
			L(\sigma+it,f)&\ll (|t|+10)^{max\{\frac{2}{3}(1-\sigma),0\}+\epsilon},
		\end{align*}
		uniformly for $\frac{1}{2}\leq \sigma\leq 1+\epsilon$ and $|t|\geq 10$.}
\end{lemma}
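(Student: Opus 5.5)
The plan is to interpolate with the Phragmén--Lindelöf convexity principle between two boundary bounds for $L(s,f)$: a subconvex bound on the critical line $\sigma=\tfrac12$, and the trivial bound on the line of absolute convergence $\sigma=1+\epsilon$. The target exponent $\tfrac23(1-\sigma)$ equals $\tfrac13$ at $\sigma=\tfrac12$ and vanishes at $\sigma=1$. So it is exactly the linear interpolation between a Weyl-strength bound $L(\tfrac12+it,f)\ll (|t|+10)^{\frac13+\epsilon}$ and $O(1)$ near $\sigma=1$.

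\textbf{The line $\sigma=1+\epsilon$.} Here Deligne's bound $|\lambda_f(n)|\le d(n)$ gives
\[
|L(1+\epsilon+it,f)|\le \sum_{n}\frac{d(n)}{n^{1+\epsilon}}=\zeta^{2}(1+\epsilon)\ll_\epsilon 1 .
\]
For weight $1$ the Deligne--Serre theorem supplies the Ramanujan bound. In our setting this is immediate anyway, since $\lambda_f(p)=a_{\mathbb{K}}(p)-1\in\{-1,0,1,2\}$.

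\textbf{The line $\sigma=\tfrac12$.} This is the main obstacle. One needs the $t$-aspect Weyl bound $L(\tfrac12+it,f)\ll_{f,\epsilon}(1+|t|)^{\frac13+\epsilon}$ for our fixed weight-one newform of level $N=|D_{\mathbb{K}}|$ with nebentypus. I would get it from Good's theorem, which rests on the mean value estimate $\int_T^{2T}|L(\tfrac12+it,f)|^2\,dt\ll T\log T$ and its refinement to short intervals. Alternatively one can use the general $t$-aspect subconvexity results for $GL(2)$ automorphic forms, such as Meurman's method or the Blomer--Harcos / Michel--Venkatesh bounds. These extend to arbitrary level and character with constants depending only on $f$, hence only on $|D_{\mathbb{K}}|$. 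I would quote such a result rather than reprove it. An honest reproof would go through an approximate functional equation of length $\asymp |t|$ combined with van der Corput/Voronoi-type estimates for $\sum_{n\sim M}\lambda_f(n)n^{-it}$, and that is the technical heart of the matter.

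\textbf{Interpolation.} $L(s,f)$ is entire, since $f$ is a cusp form. It has finite order in the strip $\tfrac12\le\sigma\le 1+\epsilon$, by the functional equation and Stirling's formula. Applying Phragmén--Lindelöf to $L(s,f)$ in the region $\tfrac12\le\sigma\le 1+\epsilon$, $|t|\ge 10$, gives
\[
L(\sigma+it,f)\ll (|t|+10)^{\frac13\cdot\frac{1+\epsilon-\sigma}{1/2+\epsilon}+\epsilon}.
\]
After renaming $\epsilon$ this is $\ll (|t|+10)^{\frac23(1-\sigma)+\epsilon}$ for $\tfrac12\le\sigma\le1$. In the range $1\le\sigma\le1+\epsilon$ the exponent is at most $\epsilon$, which matches $\max\{\tfrac23(1-\sigma),0\}+\epsilon$. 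The argument is uniform in $\sigma$ because the convexity principle yields a bound uniform across the strip.
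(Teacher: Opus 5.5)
Your argument is correct and is the standard derivation behind the paper's citation of \cite{Aivik}; the paper itself gives no proof. The lemma at $\sigma=\tfrac12$ is exactly the Weyl bound $L(\tfrac12+it,f)\ll(|t|+10)^{\frac13+\epsilon}$, and Phragm\'en--Lindel\"of against the trivial bound on $\sigma=1+\epsilon$ gives the rest, with your $\epsilon$-bookkeeping in order.

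One correction: only Weyl-strength inputs will do, namely Good's, Jutila's or Meurman's method in a version valid for level $N=|D_{\mathbb{K}}|$, weight one and nebentypus. The Blomer--Harcos and Michel--Venkatesh bounds save far less than the $\tfrac16$ over convexity that is needed, so interpolating them only gives $\max\{c(1-\sigma),0\}$ with some $c>\tfrac23$.
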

\begin{lemma}\cite{Ylin}
	\textit{For any $\epsilon>0$, we have
		\begin{align*}
			L(\sigma+it,sym^{2}f)&\ll (|t|+10)^{max\{\frac{6}{5}(1-\sigma),0\}+\epsilon},
		\end{align*}
		uniformly for $\frac{1}{2}\leq \sigma\leq 1+\epsilon$ and $|t|\geq 10$.}
\end{lemma}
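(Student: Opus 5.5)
The plan is to interpolate between two endpoint bounds by the Phragmén–Lindelöf convexity principle. The input doing the real work is a subconvexity estimate on the critical line; everything else is standard.

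\textbf{The line $\sigma=1+\epsilon$.} Here the Euler product
\begin{align*}
L(s,sym^{2}f)=\prod_{p}\Big(1-\frac{\alpha^{2}(p)}{p^{s}}\Big)^{-1}\Big(1-\frac{\beta^{2}(p)}{p^{s}}\Big)^{-1}\Big(1-\frac{1}{p^{s}}\Big)^{-1}
\end{align*}
converges absolutely, because $|\alpha(p)|=|\beta(p)|=1$ by Deligne. Its coefficients satisfy $\lambda_{sym^{2}f}(n)\ll d_{3}(n)\ll n^{\epsilon}$. Hence $L(1+\epsilon+it,sym^{2}f)\ll_{\epsilon}1$ uniformly in $t$, which matches the exponent $\max\{\frac{6}{5}(1-\sigma),0\}+\epsilon=\epsilon$ there. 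The same bound holds for all $\sigma\geq 1+\epsilon$. It also covers the range $1\leq\sigma\leq 1+\epsilon$, since the target exponent is $\epsilon$ there and a bound of size $(|t|+10)^{\epsilon}$ suffices.

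\textbf{The line $\sigma=\frac12$.} Here I need $L(\frac12+it,sym^{2}f)\ll(|t|+10)^{\frac35+\epsilon}$. This is a genuine subconvexity bound, since the convexity exponent for this degree-three $L$-function is $\frac34$. It is not obtainable from the functional equation alone. I would import it from the literature: the $t$-aspect subconvexity for $GL(3)$ symmetric square lifts due to Lin, Nunes and Qi, which is exactly the source cited as \cite{Ylin}. Reproving it would require a delta-method or moment analysis, and that is the main obstacle if one insists on a self-contained argument. If that route were unavailable, I would first try to derive it from a short-interval second-moment bound $\int_{T}^{T+T^{3/5}}|L(\frac12+it,sym^{2}f)|^{2}dt\ll T^{\frac65+\epsilon}$ combined with a Gabriel/Heath-Brown type mean-to-pointwise argument.

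\textbf{Interpolation.} Next I would use that $L(s,sym^{2}f)$ is entire, by Shimura/Gelbart–Jacquet, and of finite order in the strip $\frac12\leq\sigma\leq 1+\epsilon$, for instance by the functional equation together with Stirling's formula. I would apply Phragmén–Lindelöf to $L(s,sym^{2}f)$ on this strip with endpoint exponents $\frac35+\epsilon$ at $\sigma=\frac12$ and $\epsilon$ at $\sigma=1+\epsilon$. The exponent $\mu(\sigma)$ is then bounded by the linear function through these values, giving
\begin{align*}
\mu(\sigma)\leq \frac{3}{5}\cdot\frac{1+\epsilon-\sigma}{\frac12+\epsilon}+\epsilon\leq \frac{6}{5}(1-\sigma)+O(\epsilon).
\end{align*}
Here the factor $\frac35/\frac12=\frac65$ produces the stated slope. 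On the subrange $1\leq\sigma\leq 1+\epsilon$, where $1-\sigma\leq 0$, the maximum with $0$ takes over, which is consistent with the first step. Renaming $\epsilon$ then gives the lemma uniformly for $|t|\geq 10$.
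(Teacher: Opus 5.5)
Your proposal is correct and follows the paper's route. The paper gives no argument and simply imports the bound from \cite{Ylin}, and that bound is obtained exactly as you describe: the trivial estimate on $\sigma=1+\epsilon$, plus the Lin--Nunes--Qi subconvexity bound $L(\frac12+it,sym^{2}f)\ll(|t|+10)^{\frac35+\epsilon}$, interpolated by Phragm\'en--Lindel\"of, which is where the slope $\frac65=\frac{3/5}{1/2}$ comes from. Your claim that the Euler-product bound handles $1\le\sigma<1+\epsilon$ is not justified as stated, since that bound is not uniform as $\sigma\to1^{+}$, but this is harmless because your interpolation over the whole strip $\frac12\le\sigma\le1+\epsilon$ already covers that range.
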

\begin{lemma}\cite{AS}
	\textit{For $T\geq 100$, we have 
		\begin{align*}
			\int_{10}^{T}|L(\frac{1}{2}+\epsilon+it,sym^{2}f)|^{2}\,dt&\ll T^{\frac{3}{2}+\epsilon}.
		\end{align*}
	}
\end{lemma}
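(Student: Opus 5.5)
The plan is to treat $L(s,sym^{2}f)$ as a degree-$3$ $L$-function. We combine an approximate functional equation with the Montgomery--Vaughan mean value theorem for Dirichlet polynomials, working over dyadic ranges of $t$.

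\textbf{Step 1: reduction and the approximate functional equation.} It suffices to prove
$$\int_{T}^{2T}\big|L(\tfrac12+\epsilon+it,sym^{2}f)\big|^{2}\,dt\ll T^{\frac32+\epsilon},$$
and then sum over $O(\log T)$ dyadic blocks. The completed function $L(s,sym^{2}f)$ is entire (Shimura, Gelbart--Jacquet) and satisfies a functional equation $L(s,sym^{2}f)=\chi(s)L(1-s,sym^{2}f)$. Its gamma factor has degree $3$, so the analytic conductor at height $t$ is $\asymp t^{3}$. By Stirling's formula, $|\chi(\sigma+it)|\asymp |t|^{3(\frac12-\sigma)}$. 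For $t\in[T,2T]$ the approximate functional equation, with a smooth weight, gives
$$L(\tfrac12+\epsilon+it,sym^{2}f)=\sum_{n}\frac{\lambda_{sym^{2}f}(n)}{n^{\frac12+\epsilon+it}}V\Big(\frac{n}{X}\Big)+\chi(\tfrac12+\epsilon+it)\sum_{n}\frac{\lambda_{sym^{2}f}(n)}{n^{\frac12-\epsilon-it}}V\Big(\frac{n}{Y}\Big)+O(T^{-A}),$$
where $XY\asymp T^{3}$. We take $X=Y=T^{3/2}$, so both sums are effectively of length $T^{\frac32+\epsilon}$.

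\textbf{Step 2: mean values of the two sums.} We apply the mean value theorem
$$\int_{T}^{2T}\Big|\sum_{n\le N}b_{n}n^{-it}\Big|^{2}dt=(T+O(N))\sum_{n\le N}|b_{n}|^{2}.$$
\begin{itemize}
\item For the first sum, take $b_{n}=\lambda_{sym^{2}f}(n)n^{-\frac12-\epsilon}V(n/X)$. Since $\lambda_{sym^{2}f}(n)\ll n^{\epsilon}$, we get $\sum|b_{n}|^{2}\ll\sum_{n}n^{-1-\epsilon}\ll 1$. The first sum therefore contributes $\ll(T+T^{\frac32+\epsilon})\ll T^{\frac32+\epsilon}$.
\item For the dual sum, the factor $|\chi|^{2}\asymp T^{-6\epsilon}$, while $\sum_{n\le T^{3/2+\epsilon}}|\lambda_{sym^{2}f}(n)|^{2}n^{-1+2\epsilon}\ll T^{4\epsilon}$. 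Hence this part is also $\ll T^{\frac32+\epsilon}$.
\end{itemize}
The smooth weights depend mildly on $t$ through the gamma factors. We remove this dependence by writing $V$ as a Mellin integral, moving the $t$-dependence inside the integral, and applying the mean value theorem for each fixed value of the Mellin variable. The cross terms are handled by Cauchy--Schwarz.

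\textbf{Main obstacle and alternative.} The main technical point is making the approximate functional equation uniform in $t$ with the $t$-dependent cutoff. This needs care, but it is standard (Iwaniec--Kowalski, Theorem 5.3). If it proves troublesome, there is an alternative route. First bound the mean square on the line $\sigma=1+\epsilon$ by $\ll T$ via the mean value theorem. Then bound it on $\sigma=-\epsilon$ by $\ll T^{3+\epsilon}$ via the functional equation. Finally, apply Gabriel's convexity theorem on the strip, which gives exponent $\frac12\cdot 1+\frac12\cdot 3=2$ at the midpoint. That is too weak, so the approximate functional equation route is the one to pursue.

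Note that the balanced choice $X=Y=T^{3/2}$ is exactly what produces the exponent $\frac32$. This reflects the fact that $\sqrt{\text{conductor}}=T^{3/2}$ dominates the diagonal term $T$ in the mean value theorem.
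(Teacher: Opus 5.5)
The paper gives no argument for this lemma; it simply quotes it from \cite{AS}. You instead give the standard self-contained derivation: an approximate functional equation with both sums of length about $\sqrt{\text{conductor}}\asymp T^{3/2}$, followed by the mean value theorem on dyadic blocks. This is correct.
\begin{itemize}
\item The direct sum gives $(T+T^{3/2+\epsilon})\sum_n|\lambda_{sym^{2}f}(n)|^{2}n^{-1-2\epsilon}\ll T^{3/2+\epsilon}$.
\item In the dual sum, $|\chi(\frac12+\epsilon+it)|^{2}\asymp T^{-6\epsilon}$ absorbs $\sum_{n\le N}|\lambda_{sym^{2}f}(n)|^{2}n^{-1+2\epsilon}\ll N^{2\epsilon}\log N$.
\end{itemize}
What your route buys over the citation is that the inputs become explicit: a degree-$3$ functional equation, holomorphy, and $\sum_{n\le x}|\lambda_{sym^{2}f}(n)|^{2}\ll x^{1+\epsilon}$. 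These hold for the level-one $f\in H_k(SL(2,\mathbb{Z}))$ of the preliminaries, which is where your appeal to Shimura and Gelbart--Jacquet is valid. They would have to be rechecked for the weight-one dihedral form attached to $\mathbb{K}$ that the paper later plugs into this lemma, since that form's symmetric square is not cuspidal.

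Two points should be tightened.

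First, your displayed formula with a $t$-independent $V$ and an $O(T^{-A})$ error is only schematic. The exact version (Iwaniec--Kowalski, Thm.~5.3) has no error term but uses weights $V_s$ that depend on $t$. The clean fix is as follows.
\begin{itemize}
\item Truncate both sums at $N=T^{3/2+\epsilon}$ \emph{before} using the Mellin representation. The tails are $O(T^{-A})$ uniformly for $t\in[T,2T]$, by shifting the $u$-contour to the right.
\item Then, integrating over the line $\mathrm{Re}\,u=1/\log T$, write
\begin{align*}
\sum_{n\le N}\frac{\lambda_{sym^{2}f}(n)}{n^{s}}V_{s}(n)=\frac{1}{2\pi i}\int_{(1/\log T)}G(u)\frac{\gamma(s+u)}{\gamma(s)}\sum_{n\le N}\frac{\lambda_{sym^{2}f}(n)}{n^{s+u}}\,\frac{du}{u}.
\end{align*}
\item Bound $|G(u)\gamma(s+u)/\gamma(s)|\ll e^{-|\mathrm{Im}\,u|^{2}/2}$ uniformly in $t\in[T,2T]$. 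This follows from Stirling with $G(u)=e^{u^{2}}$.
\item Only then apply Cauchy--Schwarz in $u$, Fubini, and the mean value theorem to the shifted polynomial. This costs at most a power of $\log T$.
\end{itemize}
Without the truncation, interchanging sum and integral forces $\mathrm{Re}(s+u)>1$. The factor $|\gamma(s+u)/\gamma(s)|^{2}\asymp T^{3\,\mathrm{Re}\,u}$ then multiplies the diagonal $T$, and you only get $T^{5/2}$.

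Second, the numbers in your discarded convexity route are off. On $\sigma=-\epsilon$ one has $|\chi(-\epsilon+it)|^{2}\asymp t^{3+6\epsilon}$, and this is integrated over an interval of length $T$. So the mean square there is $\asymp T^{4+6\epsilon}$, not $T^{3+\epsilon}$. Interpolation therefore gives exponent $\frac52$ (the square of the convexity bound $t^{3/4}$, integrated), not $2$. Your conclusion that this route is too weak is unaffected.
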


\begin{lemma}\cite{Lsankar}
	\textit{
		For $T\geq 100$, we have 
		\begin{align*}
			\int_{10}^{T}|L(\frac{1}{2}+\epsilon+it,f)|^{2}\,dt&\ll T^{1+\epsilon}.
		\end{align*}
	}
\end{lemma}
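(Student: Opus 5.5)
We would prove this lemma by combining an approximate functional equation for $L(s,f)$ with the mean value theorem for Dirichlet polynomials, working dyadically in $t$. Since $\int_{10}^{T}=\sum_{j}\int_{T/2^{j+1}}^{T/2^{j}}$ has only $O(\log T)$ pieces, it suffices to show
\begin{align*}
\int_{U}^{2U}\big|L(\tfrac12+\epsilon+it,f)\big|^{2}\,dt\ll U^{1+\epsilon}
\end{align*}
uniformly for $10\le U\le T$. Here $f$ is the weight $1$ cusp form of level $N=|D_{\mathbb{K}}|$ with $\zeta_{\mathbb{K}}(s)=\zeta(s)L(s,f)$. Its completed $L$-function $\Lambda(s,f)=(\sqrt N/2\pi)^{s}\Gamma(s)L(s,f)$ is entire and satisfies $\Lambda(s,f)=\varepsilon_f\Lambda(1-s,\bar f)$. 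The analytic conductor at height $t$ is therefore $\asymp N t^{2}$.

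\textbf{Step 1 (smoothed approximate functional equation).} For $s=\tfrac12+\epsilon+it$ with $t\in[U,2U]$, we write
\begin{align*}
L(s,f)=\sum_{n\ge1}\frac{\lambda_f(n)}{n^{s}}V_{s}\Big(\frac{n}{X}\Big)+\varepsilon_f\frac{\gamma(1-s)}{\gamma(s)}\sum_{n\ge1}\frac{\overline{\lambda_f(n)}}{n^{1-s}}V_{1-s}\Big(\frac{nX}{Nt^{2}}\Big)\cdot(\ldots),
\end{align*}
with $\gamma(s)=(\sqrt N/2\pi)^{s}\Gamma(s)$. We choose the balanced cutoff $X\asymp \sqrt N\,U$, fixed on the whole dyadic block. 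This is obtained in the standard way by shifting the contour in $\frac{1}{2\pi i}\int_{(2)}\Lambda(s+w,f)\,G(w)\,w^{-1}X^{w}\,dw$, with $G(w)=e^{w^{2}}$, and applying the functional equation. By Stirling's formula, $V_s(y)$ decays faster than any power of $y$ once $y\gg U^{\epsilon}$, and $|\gamma(1-s)/\gamma(s)|\asymp t^{-2\epsilon}\le1$. Both sums are thus effectively Dirichlet polynomials of length $M\ll U^{1+\epsilon}$, up to a negligible error $O(U^{-10})$.

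\textbf{Step 2 (removing the $t$-dependence).} The weight $V_s$ depends on $t$, which prevents a direct application of the mean value theorem. The cleanest remedy is to keep the $w$-integral explicit: truncate it to $|\mathrm{Im}\,w|\le \log^{2}U$ (the tail is negligible thanks to $e^{w^{2}}$), move the line to $\mathrm{Re}\,w=\epsilon$, and apply Cauchy--Schwarz in $w$. This reduces the problem, up to a factor $U^{\epsilon}$, to bounding
\begin{align*}
\int_{U}^{2U}\Big|\sum_{n\le M}\frac{a_n}{n^{it}}\Big|^{2}dt,\qquad a_n=\lambda_f(n)n^{-1/2-\epsilon-iv}\ \text{or}\ \overline{\lambda_f(n)}\,n^{-1/2+\epsilon+iv}e^{-n/M},
\end{align*}
for fixed $v$. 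The gamma-ratio factor has modulus $\ll 1$ and comes outside. I expect this bookkeeping to be the main technical obstacle. If it proves awkward, a fallback is to split $[U,2U]$ into unit intervals and use the unsmoothed approximate functional equation with an error term $O(U^{-1/2+\epsilon})$ uniform in $t$.

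\textbf{Step 3 (mean value theorem).} By the Montgomery--Vaughan mean value theorem,
\begin{align*}
\int_{U}^{2U}\Big|\sum_{n\le M}a_n n^{-it}\Big|^{2}dt=\sum_{n\le M}\big(U+O(n)\big)|a_n|^{2}.
\end{align*}
Using $|\lambda_f(n)|\le d(n)\ll n^{\epsilon}$ (Deligne--Serre for weight $1$), or alternatively the Rankin--Selberg bound $\sum_{n\le x}|\lambda_f(n)|^{2}\ll x$, we get $\sum_{n\le M}|a_n|^{2}\ll M^{\epsilon}$ and $\sum_{n\le M}n|a_n|^{2}\ll M^{1+\epsilon}$. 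With $M\ll U^{1+\epsilon}$, the dyadic integral is therefore $\ll U^{1+\epsilon}$. Summing over the $O(\log T)$ dyadic blocks gives $\int_{10}^{T}|L(\tfrac12+\epsilon+it,f)|^{2}\,dt\ll T^{1+\epsilon}$, after renaming $\epsilon$.
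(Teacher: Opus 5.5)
The paper gives no proof of this lemma; it simply quotes it from \cite{Lsankar}. Your proposal is therefore a genuinely different, self-contained route, and it is correct.

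Your argument is the standard proof of second-moment bounds of this kind:
\begin{itemize}
\item a balanced smoothed approximate functional equation, with analytic conductor $\asymp Nt^{2}$, so both sums have length $\ll U^{1+\epsilon}$ after discarding a negligible tail;
\item separation of the $t$-dependence through the explicit $w$-integral;
\item the Montgomery--Vaughan mean value theorem, together with $\sum_{n\le x}|\lambda_f(n)|^{2}\ll x$.
\end{itemize}
The separation step works because the factor $\gamma(s+w)/\gamma(s)$ does not depend on $n$. For $\mathrm{Re}\,w=\epsilon$ and $t\in[U,2U]$ it has size $\ll U^{\epsilon}e^{\pi|\mathrm{Im}\,w|/2}$, and $e^{w^{2}}$ absorbs this before you apply Cauchy--Schwarz in $w$.

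The citation buys brevity. Your argument buys transparency about the inputs: it uses only the functional equation of $\Lambda(s,f)=(\sqrt{N}/2\pi)^{s}\Gamma(s)L(s,f)$ and an average bound for $|\lambda_f(n)|^{2}$. It therefore applies verbatim to the weight-one form of level $|D_{\mathbb{K}}|$ to which the lemma is actually applied. The paper, by contrast, introduces $f$ in Section 2 as a full-level eigenform of even weight and never comments on the transfer.

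A few bookkeeping points, none of which affect the bound:
\begin{itemize}
\item The truncation to $n\le M$ has to precede the shift to $\mathrm{Re}\,w=\epsilon$, since the Dirichlet series is not absolutely convergent there. Your Step 1 does this.
\item After the shift, the coefficients are $\lambda_f(n)n^{-1/2-2\epsilon-iv}$ for the first sum, and $\overline{\lambda_f(n)}\,n^{-1/2-iv}$ paired with $n^{+it}$ for the dual sum.
\item The factor $e^{-n/M}$ in your dual coefficients has no source and should be dropped.
\end{itemize}

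You should also drop the unit-interval fallback. The mean value theorem on an interval of length $1$ gives $\sum_{n\le M}(1+O(n))|a_n|^{2}\asymp M^{1-2\epsilon}$. Summing over the $U$ intervals then only recovers the convexity-strength bound $U^{2-2\epsilon}$. A $t$-dependent polynomial length must be removed by a smoothing or Perron-type device, which is exactly what your main route already provides.
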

\section{Proof of the main Theorem}
To start with, we make the choice of our $T=\pm T^{\star}\in (U,2U)$ to satisfy Lemma $2.3$. By applying Perron's formula \cite{Agranville} to $F(s)$ with the choice $\eta= 4+\epsilon$ and $1\leq T \leq x$, we get
\begin{align*}
	\sum\limits_{\substack{n\leq x \\
			n=n_{1}^{2}+n_{2}^{2}+\dots+n_{8}^{2}\\(n_1,n_2,\dots,n_{8})\in \mathbb{Z}^{8}}} a_{\mathbb{K}}^{2}(n)&= \frac{1}{2\pi i} \int_{4+\epsilon-iT}^{4+\epsilon+iT} F(s) \frac{x^{s}}{s}\,ds+O\Big(\frac{x^{4+\epsilon}}{T}\Big).
\end{align*}
By moving the line of integration to $Re(s)=\frac{7}{2}+\epsilon$. We consider the rectangle $R$ with the vertices $\frac{7}{2}+\epsilon\pm iT$ and $4+\epsilon\pm iT$. In the rectangle $R$, $F(s)\frac{x^{s}}{s}$ has a pole of multiplicity $2$ at $s=4$ coming from the factor $\zeta^{2}(s-3)$. Thus, by using Cauchy's residue theorem on $R$, we get
\begin{align*}
	\frac{1}{2\pi i}\int_{4+\epsilon-iT}^{4+\epsilon+iT}F(s)\frac{x^{s}}{s}\,ds&=M(x)+\frac{1}{2\pi i}\Big\{-\int_{4+\epsilon+iT}^{\frac{7}{2}+\epsilon+iT}-\int_{\frac{7}{2}+\epsilon+iT}^{\frac{7}{2}+\epsilon-iT}\\
	&~~~~~~~~~~~~~~~~~~~~~~~-\int_{\frac{7}{2}+\epsilon-iT}^{4+\epsilon-iT}\Big\}F(s) \frac{x^{s}}{s}\,ds\\
	&=M(x)+\frac{1}{2\pi i}(J_{1}+J_{2}+J_{3})~~~~~~~(\text{say}),
\end{align*}
here $M(x)$ is the main term i.e, the residue of $F(s)\frac{x^{s}}{s}$ at $s=4$, and is given by $Cx^{4}P_{1}(\log{x})$ where $P_{1}(\log{x})$ is a polynomial of degree $1$ in $\log{x}$.\\

Let $C_{H}$ be the horizontal contribution of $F(s)\frac{x^{s}}{s}$ along the lines $\frac{7}{2}+\epsilon-iT$ to $4+\epsilon-iT$ and $\frac{7}{2}+\epsilon+iT$ to $4+\epsilon+iT$.
\begin{align*}
	|C_{H}|& \ll \Bigg|\int_{\frac{7}{2}+\epsilon+iT}^{4+\epsilon+iT}\,F(s)\frac{x^{s}}{s}\,ds\Bigg|\\
	&\ll \int_{\frac{7}{2}+\epsilon+iT}^{4+\epsilon+iT} |\zeta(s-3)|^{2}|L(s-3,f)|^{2}|L(s-3,sym^{2}f)|\Big|\frac{A_{2}(s)}{B_{2}(s)}\Big||G(s)|\frac{|x^{s}|}{|s|}\,ds.
\end{align*}
As $\Big|\frac{A_{2}(s)}{B_{2}(s)}\Big||G(s)|$ is bounded for $Re(s)>\frac{7}{2}$, we have
\begin{align*}
	|C_{H}|&\ll\frac{x^{3}}{T}\int_{\frac{1}{2}+\epsilon}^{1+\epsilon}|\zeta(\sigma+iT)|^{2}|L(\sigma+iT,f)|^{2}|L(\sigma+iT,sym^{2}f)|\,x^{\sigma}\,d\sigma.
\end{align*}
By using Lemmas $2.3$, $2.5$ and $2.6$ we get
\begin{align*}
	|C_{H}|& \ll \frac{x^{3}}{T}\,T^{2\epsilon}\int_{\frac{1}{2}+\epsilon}^{1+\epsilon} T^{\frac{4}{3}(1-\sigma)+\epsilon}\,T^{\frac{6}{5}(1-\sigma)+\epsilon}\,x^{\sigma}\,d\sigma\\
	&\ll x^{3}T^{\frac{23}{15}+4\epsilon}\int_{\frac{1}{2}+\epsilon}^{1+\epsilon} \Big(\frac{x}{T^{\frac{38}{15}}}\Big)^{\sigma}\, d\sigma.
\end{align*}
Since $\Big(\frac{x}{T^{\frac{38}{15}}}\Big)^{\sigma}$ is a monotonic function of $\sigma$ on $[\frac{1}{2}+\epsilon,1+\epsilon]$, it follows that the maximum is attained at the endpoints of the interval $[\frac{1}{2}+\epsilon,1+\epsilon]$. Thus,
\begin{align*}
	|C_{H}|&\ll x^{3}T^{\frac{23}{15}+4\epsilon} \Big(\Big(\frac{x}{T^{\frac{38}{15}}}\Big)^{1+\epsilon}+\Big(\frac{x}{T^{\frac{38}{15}}}\Big)^{\frac{1}{2}+\epsilon}\Big)\\
	& \ll \frac{x^{4+\epsilon}}{T}+x^{\frac{7}{2}+\epsilon}T^{\frac{4}{15}+\epsilon}.
\end{align*}
Now we estimate the contribution along the left vertical line $(J_{2})$ in absolute value
\begin{align*}
	|C_{V}|&=\Big|\int_{\frac{7}{2}+\epsilon-iT}^{\frac{7}{2}+\epsilon+iT} F(s)\frac{x^{s}}{s}\,ds\Big|\\
	&=\int_{\frac{7}{2}+\epsilon-iT}^{\frac{7}{2}+\epsilon+iT} |\zeta(s-3)|^{2}|L(s-3,f)|^{2}|L(s-3,sym^{2}f)|\Big|\frac{A_{2}(s)}{B_{2}(s)}\Big||G(s)|\frac{|x^{s}|}{|s|}\,ds\\
	&\ll x^{\frac{7}{2}+\epsilon}+x^{\frac{7}{2}+\epsilon}\int_{10}^{T} |\zeta(\frac{1}{2}+\epsilon+it)|^{2}|L(\frac{1}{2}+\epsilon+it,f)|^{2}|L(\frac{1}{2}+\epsilon+it,sym^{2}f)|\frac{1}{t}\,dt\\
	&\ll x^{\frac{7}{2}+\epsilon}+x^{\frac{7}{2}+\epsilon} \max_{10\leq t\leq T}\{|\zeta(\frac{1}{2}+\epsilon+it)|^{2}|L(\frac{1}{2}+\epsilon+it,f)|\}\\
	&~~~~~~~~~~\times \int_{10}^{T} |L(\frac{1}{2}+\epsilon+it,f)|\,|L(\frac{1}{2}+\epsilon+it,sym^{2}f)|\frac{1}{t}\,dt.
\end{align*}
By using Lemmas $2.4$, $2.5$, $2.7$ and $2.8$ we get
\begin{align*}
	|C_{V}| &\ll x^{\frac{7}{2}+\epsilon}+ x^{\frac{7}{2}+\epsilon}\,(T^{\frac{13}{84}})^{2}\,T^{\frac{1}{3}+\epsilon}\Big(\int_{10}^{T}\frac{|L(\frac{1}{2}+\epsilon+it,f)|^{2}}{t}\,dt\Big)^{\frac{1}{2}}\Big(\int_{10}^{T}\frac{|L(\frac{1}{2}+\epsilon+it,sym^{2}f)|^{2}}{t}\,dt\Big)^{\frac{1}{2}}\\
	& \ll x^{\frac{7}{2}+\epsilon}+ x^{\frac{7}{2}+\epsilon}\,T^{\frac{9}{14}+\epsilon} T^{\frac{1}{4}+\epsilon} \\
	&\ll x^{\frac{7}{2}+\epsilon}\,T^{\frac{25}{28}+\epsilon}.
\end{align*}
Therefore, the total error is (note that $\frac{4}{15}<\frac{25}{28}$)
\begin{align*}
	|C_{H}|+|C_{V}|+O\Big(\frac{x^{4+\epsilon}}{T}\Big)&\ll\frac{x^{4+\epsilon}}{T}+x^{\frac{7}{2}+\epsilon}T^{\frac{25}{28}+\epsilon}+O\Big(\frac{x^{4+\epsilon}}{T}\Big).
\end{align*}
To get an optimal estimate, we choose $T=x^{\frac{14}{53}}$ so that we obtain
\begin{align*}
	|C_{H}|+|C_{V}|+O\Big(\frac{x^{4+\epsilon}}{T}\Big)&= O(x^{\frac{198}{53}+\epsilon}).
\end{align*}
Therefore, 
\begin{align*}
	\sum\limits_{\substack{n=\sum_{i=1}^{8}n_{i}^{2} \leq x\\(n_1,n_2,n_{3},n_{4},n_{5},n_{6},n_{7},n_{8})\in \mathbb{Z}^{8}}} a_{\mathbb{K}}^{2}(n)&= Cx^{4}P_{1}(\log{x})+O(x^{\frac{198}{53}+\epsilon}).
\end{align*}
This proves the main Theorem.
\hfill $\Box$

\end{document}